\documentclass{amsart}
\usepackage[all]{xy}
\usepackage{enumerate}
\usepackage{mathrsfs}
\usepackage{amssymb}
\usepackage{graphicx}
\usepackage{tikz-cd}

\theoremstyle{plain}
\newtheorem{theorem}{Theorem}
\newtheorem{lemma}[theorem]{Lemma}

\newtheorem{proposition}[theorem]{Proposition}

\theoremstyle{definition}
\newtheorem{definition}[theorem]{Definition}

\theoremstyle{remark}
\newtheorem{remark}[theorem]{Remark}

\newcommand\Aut{\operatorname{Aut}}

\newcommand\s{\mathbf{s}}
\newcommand\R{\mathbb{R}}
\newcommand\C{\mathbb{C}}
\newcommand\Sp{\operatorname{Sp}}
\newcommand\GL{\operatorname{GL}}
\newcommand\calO{\mathcal{O}}
\newcommand\Ad{\operatorname{Ad}}

\title[Generalized Gelfand pairs attached to $3$-step nilpotent Lie groups]{A family of generalized Gelfand pairs\\ attached to $3$-step nilpotent Lie groups}

\author{Mai Katada}
\author{Cornelie Mitcha Malanda}

\address{Faculty of Mathematics, Kyushu University, 744, Motooka, Nishi-ku, Fukuoka, Japan 819-0395}
\address{\textit{Current address}: Graduate School of Mathematical Sciences, University of Tokyo, Tokyo 153-8914, Japan}
\email{mkatada@ms.u-tokyo.ac.jp}
\address{Faculty of Sciences and Technology, Marien Ngouabi University, Congo}
\address{\textit{Current address}: ZEN University, 3-12-11 Shinjuku, Zushi-shi, Kanagawa, Japan}
\email{Cornelie.mitcha@yahoo.fr}

\keywords{Generalized Gelfand pairs, Nilpotent Lie groups}

\subjclass[2020]
{
43A80, 22E25
}

\begin{document}

\begin{abstract}
It is well known that if $(K\ltimes N, K)$ is a Gelfand pair with $N$ a nilpotent Lie group and $K$ a compact subgroup of the automorphism group of $N$, then $N$ is at most $2$-step.
The notion of Gelfand pairs is extended to the notion of generalized Gelfand pairs, where $K$ is not necessarily compact.
Gallo and Saal constructed an example of a generalized Gelfand pair $(K\ltimes N, K)$ with $N$ $3$-step and $K$ non-compact.
In this work, we construct a family of generalized Gelfand pairs $(K_d \ltimes N_d, K_d)_{d\ge 1}$, where $N_d$ is $3$-step and $K_d$ is isomorphic to $\R^{d+1}$; the case of $d=1$ recovers the example of Gallo and Saal.
\end{abstract}
\maketitle

\section{Introduction}
Let $G$ be a unimodular Lie group and $K$ a compact subgroup. We say that $(G,K)$ is a Gelfand pair if the convolution algebra of $K$-bi-invariant functions on $G$ is commutative, or equivalently, if every irreducible representation of $K$ occurs at most once in the regular representation of $G$ on $L^2(G/K)$. 
Gelfand pairs play a central role in non-commutative harmonic analysis, providing a natural framework for extending harmonic analytical methods to the study of commutative Banach algebras.
Gelfand pairs are also crucial in the representation theory of non-compact semisimple Lie groups \cite{knapp2001representation, harish1958spherical, helgason1984groups}.

There is a substantial body of literature on Gelfand pairs \cite{faraut1980analyse, helgason1984groups, van2009introduction}.
In particular, a great deal of attention has been paid to \emph{nilpotent Gelfand pairs} $(K\ltimes N,K)$ (often denoted by $(K,N)$), where $N$ is a nilpotent Lie group and $K$ is a compact subgroup of the automorphism group of $N$ \cite{benson1990gel, benson1992bounded, benson1999orbit, lauret2000gelfand, fischer2012nilpotent}.
In \cite{benson1990gel}, Benson, Jenkins, and Ratcliff showed that if $(K,N)$ is a nilpotent Gelfand pair, then $N$ is at most $2$-step. In particular, they studied Gelfand pairs associated with the $(2d+1)$-dimensional Heisenberg group $H_d$, which is a $2$-step nilpotent Lie group.

In the 1980s, the notion of Gelfand pairs was extended to that of generalized Gelfand pairs, in which $K$ is allowed to be non-compact. The theory of generalized Gelfand pairs was substantially developed and popularized through the work of Gerrit van Dijk \cite{van1984generalized, van1986class, van2008gelfand, van2009introduction}. 

The study of generalized Gelfand pairs associated with nilpotent Lie groups has recently been advanced \cite{gallo2020generalized, campos2023generalized, campos2024spherical}.
In \cite{gallo2020generalized}, Gallo and Saal constructed the first example of a generalized Gelfand pair associated with a 3-step nilpotent Lie group.
Subsequently in \cite{campos2023generalized}, Campos, Garc\'{i}a, and Saal constructed a family of generalized Gelfand pairs associated with $m$-step nilpotent Lie groups with $m\ge 3$, where the case of $m=3$ recovers the example of Gallo and Saal.
In \cite{campos2024spherical}, they provided another family of generalized Gelfand pairs associated with $m$-step nilpotent Lie groups with $m\ge 3$.
In the present paper, we construct a new family of generalized Gelfand pairs associated with 3-step nilpotent Lie groups. Our main result is the following.

\begin{theorem}[Cf. Theorem \ref{maintheorem}]
    We have a family $\{(K_{d},N_{d})\}_{d\ge 1}$ of generalized Gelfand pairs $(K_d,N_d)$, where $N_{d}$ is a $(2d+2)$-dimensional $3$-step nilpotent Lie group and $K_{d}$ is a non-compact subgroup of the automorphism group of $N_{d}$ which is isomorphic to $\R^{d+1}$.
\end{theorem}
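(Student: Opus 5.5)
We construct $N_d$ explicitly and prove the generalized Gelfand property through van Dijk's criterion, via a Thomas/Gelfand-type involution argument.

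\textbf{Construction.} Let $\mathfrak n_d$ have basis $X_1,\dots,X_d,Y_1,\dots,Y_d,Z,W$ with nonzero brackets
\[
[X_i,Y_i]=Z,\qquad [X_i,Z]=W\quad (1\le i\le d).
\]
This extends the Gallo--Saal algebra (the case $d=1$). The Jacobi identity holds because $[Y_i,Z]=0$ and $W$ is central. The algebra is $3$-step, since $[\mathfrak n,[\mathfrak n,\mathfrak n]]=\R W\neq 0$, and $\dim\mathfrak n_d=2d+2$. Let $N_d=\exp\mathfrak n_d$, identified with $\mathfrak n_d$ through exponential coordinates.

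For $K_d\cong\R^{d+1}$ we take a unipotent abelian group of derivation-exponentials, parametrized by $(a,s)\in\R^d\times\R$. One natural choice is $\exp$ of the commuting derivations $D_a: Y_i\mapsto a_i Z$, $Z\mapsto$ (suitable) and $E_s$ mixing $Y$'s with $W$. The first step is to pin these down precisely, by computing $\mathrm{Der}(\mathfrak n_d)$ and selecting a $(d+1)$-dimensional abelian subalgebra of nilpotent derivations. We choose it so that its orbits on the generic coadjoint orbits are as large as possible.

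\textbf{Criterion.} By van Dijk, $(K\ltimes N,K)$ is a generalized Gelfand pair if, for every irreducible unitary representation $\pi$ of $G=K\ltimes N$, the space of $K$-invariant distribution vectors $(\mathcal H_\pi^{-\infty})^K$ has dimension at most $1$. A sufficient condition is the existence of an involutive anti-automorphism $\tau$ of $G$ that fixes $K$ and fixes every $K$-bi-invariant distribution on $G$ (Thomas's lemma). We plan the following steps.

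\begin{enumerate}[(i)]
\item Define $\tau(k,n)=\bigl(\theta(k)^{-1},\,\ldots\bigr)$ as the composition of inversion with an automorphism $\theta$ of $\mathfrak n_d$ normalizing $K_d$. For example, take $\theta(X_i)=-X_i$, $\theta(Y_i)=Y_i$, $\theta(Z)=-Z$, $\theta(W)=W$, possibly corrected by a sign on the $Y$'s.
\item Compute the $K_d$-double cosets in $G$, equivalently the $K_d$-orbits (under conjugation) on $N_d$, using the explicit unipotent action.
\item Check that $\tau$ preserves every orbit on a dense open subset.
\item Extend the invariance to all bi-invariant distributions.
\end{enumerate}

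\textbf{Main obstacle.} The difficulty lies in the last two steps. Because $K_d$ is non-compact, orbit invariance does not automatically yield distribution invariance. Orbits on the singular set, such as $W$-coordinate zero or $Z$-coordinate zero, must be handled separately, and one must rule out bi-invariant distributions supported there which $\tau$ maps to their negatives.

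As a backup, we would argue representation-theoretically, following Gallo--Saal. Using Kirillov theory, we list the generic coadjoint orbits of $N_d$, parametrized by $(\omega\neq0)$ on $W^*$ and a Casimir-type invariant. We then induce to $G$ via Mackey theory and directly compute $K_d$-invariant distribution vectors in the Schrödinger-type model $L^2(\R^{d})$. These become solutions of first-order PDEs $D_a f=0$, and we show their solution space is at most one-dimensional. The degenerate orbits, on which $W^*=0$, reduce to the $2$-step Heisenberg quotient $H_d$ with $K_d$ acting. There the result should follow from the known Heisenberg case or from a similar direct computation.
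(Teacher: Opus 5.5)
Your construction of $N_d$ fails for every $d\ge 2$, because the brackets $[X_i,Y_i]=Z$, $[X_i,Z]=W$ do not satisfy the Jacobi identity. For $i\neq j$,
\[
[X_i,[X_j,Y_j]]+[X_j,[Y_j,X_i]]+[Y_j,[X_i,X_j]]=[X_i,Z]=W\neq 0 .
\]
Your justification ($[Y_i,Z]=0$, $W$ central) never checks triples containing two different $X$'s. Conceptually, you are building $\mathfrak{n}_d$ as a central extension of $\mathfrak{h}_d$ by $\R W$ via a $2$-cocycle $c$ with $c(X_i,Z)\neq 0$. The cocycle identity on $(X_i,X_j,Y_j)$, $i\neq j$, forces $c(X_i,Z)=0$, so for $d\ge 2$ the center of $\mathfrak{h}_d$ cannot be made non-central this way. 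Only $d=1$ (the Gallo--Saal algebra) survives, so the existence statement is not proved.

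The paper gets the third step differently. It keeps the one-dimensional center of $\mathfrak{h}_d$ and adjoins a single element acting on $\mathfrak{h}_d$ by the derivation $(X,Y,T)\mapsto(0,X_{\uparrow},0)$, i.e.\ $N_d=S\ltimes H_d$ with $\s\cdot(x,y,t)=(x,y+sx_{\uparrow},t)$. The symmetry of $J_d$ is exactly what makes this a derivation; $[S,X]$ lands in the $Y$-part and $[X,Y]$ in the center. Then $K_d\cong\R^{d+1}$ is fully explicit, $(s,x,y,t)\mapsto(s,x,y+k_0x_{\uparrow},t+kx)$. In your proposal $K_d$ is never pinned down (``$Z\mapsto$ (suitable)'').

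Even on a valid group, your main route leaves open exactly the steps you flag, (iii) and (iv). For non-compact $K_d$, passing from orbit invariance on a dense set to invariance of all $K_d$-bi-invariant distributions, including those supported on singular strata, is the whole difficulty.

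Your backup route is essentially the paper's. Proposition \ref{GalloSaalcriterion} reduces everything to multiplicity-freeness of the intertwining representations $\omega_\rho$, and Kirillov theory gives explicit models. On generic orbits, $\omega_{\alpha,\lambda}(\mathbf{k})$ is multiplication by $e^{i\lambda(-\frac12 k_0uu_{\uparrow}+ku)}$ on $L^2(\R^d)$. This is multiplicity free because $u\mapsto(-\frac{\lambda}{2}uu_{\uparrow},\lambda u)$ is injective.

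For the degenerate orbits you cannot invoke ``the known Heisenberg case'': those results assume $K$ compact, and for $d\ge 2$ the quotient of $N_d$ by its center is not $H_d$. These orbits must be handled directly, with genuinely irreducible models. A useful observation is that $\mathbf{k}$ acts on $N_d$ as conjugation by $(k_0,0,-k,0)$, so you may take $\omega_\rho(\mathbf{k})=\rho(k_0,0,-k,0)$.
\begin{itemize}
\item One-point orbits give characters, so $\omega_\rho$ is one-dimensional.
\item The orbits through $(0,\mu,\nu,0)$, $\nu\neq 0$, are two-dimensional. With polarization $\{S=0\}$ the representation lives on $L^2(\R)$, where $k$ acts by a scalar and $k_0$ by the regular representation of $\R$, which is multiplicity free.
\end{itemize}
Be careful here. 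The $L^2(\R^d)$ realizations used in the paper for these two orbit types are reducible. For the orbits through $(0,\mu,\nu,0)$ this happens as soon as $d\ge 2$, since $\{(S,0,Y,T)\}$ is then not maximal isotropic. The corresponding $\omega$'s have infinite multiplicity; only the irreducible models just described give the conclusion.
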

If $d=1$, then the pair $(K_1,N_1)$ recovers the generalized Gelfand pair constructed by Gallo and Saal in \cite{gallo2020generalized}. 

In \cite{campos2024spherical}, the spherical analysis associated with the family of generalized Gelfand pairs is developed in detail. It would be interesting to develop the spherical analysis for the family of generalized Gelfand pairs that we construct in the present paper.

\section{A criterion for generalized Gelfand pairs\\ introduced by Gallo and Saal}

Here we briefly recall the definition of generalized Gelfand pairs and the criterion for generalized Gelfand pairs introduced by Gallo and Saal in \cite{gallo2020generalized}.

We first recall the definition of generalized Gelfand pairs following the setting of \cite{van2009introduction}.
Let $G$ be a unimodular Lie group and $K$ a closed unimodular subgroup of $G$.
Let $(\pi,\mathcal{H})$ be an irreducible unitary representation of $G$, where $\mathcal{H}$ is a Hilbert space. Let $\mathcal{H}^{\infty}$ denote the subspace of $\mathcal{H}$ consisting of $C^{\infty}$-vectors, which is defined by $$\mathcal{H}^{\infty}=\{v \in \mathcal{H}\mid 
\pi(-)(v):G\to \mathcal{H} \text{ is }C^{\infty}\},$$
and which is a Fr\'{e}chet space equipped with a natural Sobolev topology.
Let $\mathcal{H}^{-\infty}$ be  the \emph{anti-dual} of $\mathcal{H}^{\infty}$, which consists of \emph{distribution vectors}, which are continuous, anti-linear functionals on $\mathcal{H}^{\infty}$.
The space $\mathcal{H}^{-\infty}$ is endowed with the strong topology. Then we have natural continuous inclusions $$\mathcal{H}^{\infty} \subset \mathcal{H} \subset \mathcal{H}^{-\infty}.$$
The representation $\pi$ of $G$ on $\mathcal{H}$ induces a representation $\pi^\infty$ of $G$ on $\mathcal{H}^{\infty}$ and a representation $\pi^{-\infty}$ of $G$ on $\mathcal{H}^{-\infty}$. 
Let $\mathcal{H}^{-\infty}_K$ be the space of distribution vectors fixed by $K$,
that is, the subspace of $\mathcal{H}^{-\infty}$ defined by
$$\mathcal{H}^{-\infty}_K=\{f \in \mathcal{H}^{-\infty}\mid 
\pi^{-\infty}(k)(f)= f, \;k\in K\}.$$

\begin{definition}
	The pair $(G,K)$ is called a generalized Gelfand pair if for any irreducible unitary representation $(\pi,\mathcal{H})$ of G, the space $\mathcal{H}^{-\infty}_{K} $ is at most one dimensional.  
\end{definition}

In what follows, we restrict to the setting of Gallo and Saal: that is, we consider the pair $(K\ltimes N,K)$, where $N$ is a ($3$-step) nilpotent Lie group and $K$ is a subgroup of the automorphism group $\Aut(N)$ of $N$.
Let $\hat{N}$ (resp. $\hat{K}$) denote the set of equivalence classes of irreducible unitary representations of $N$ (resp. $K$).
Suppose that $K$ satisfies the following condition:
for any $(\rho,\mathcal{H}) \in \hat{N}$, there exists a unitary representation $(\omega_{\rho},\mathcal{H})$ of $K$ such that for any $n\in N$, $k\in K$, $$\rho(k\cdot n)=\omega_{\rho}(k)\rho(n)\omega_{\rho}(k^{-1}).$$
Note here that the representation $\omega_{\rho}$ is known as the intertwining representation (or the metaplectic representation) of $\rho$.

\begin{proposition}[Cf. {\cite[Theorem (C)]{gallo2020generalized}}]\label{GalloSaalcriterion}
In the above setting, the pair $(K\ltimes N,K)$ is a generalized Gelfand pair if and only if for any $\rho\in \hat{N}$, the unitary representation $\omega_\rho$ of $K$ is multiplicity free.
\end{proposition}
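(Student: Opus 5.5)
We prove the criterion through Mackey theory for the semidirect product $G=K\ltimes N$, together with the identification of $K$-invariant distribution vectors, in three steps.

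First, we describe the irreducible unitary representations of $G$ that could carry nonzero $K$-invariant distribution vectors. Assume the Mackey machine applies, so that the orbit space $K\backslash\hat N$ is countably separated; in the setting of Gallo and Saal this is part of the standing hypotheses. Since every $\rho\in\hat N$ is $K$-stable (that is, $\rho\circ k\simeq\rho$, which is what the existence of $\omega_\rho$ expresses), the stabilizer of $\rho$ is all of $K$. The irreducible representations of $G$ lying over $\rho$ then have the form
$$\pi_{\rho,\sigma}(k,n)=\rho(n)\omega_\rho(k)\otimes\sigma(k),\qquad \sigma\in\hat K,$$
acting on $\mathcal H_\rho\otimes\mathcal H_\sigma$. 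We would check the cocycle identity directly from $\rho(k\cdot n)=\omega_\rho(k)\rho(n)\omega_\rho(k)^{-1}$. Here $\omega_\rho$ is a genuine representation by hypothesis, so no projective twist appears. By Mackey, every irreducible $\pi\in\hat G$ is equivalent to some $\pi_{\rho,\sigma}$.

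Second, we compute $\mathcal H^{-\infty}_K$ for $\pi=\pi_{\rho,\sigma}$. Restricted to $K$, $\pi$ becomes $\omega_\rho\otimes\sigma$. By the duality
$$\mathcal H^{-\infty}_K\cong \operatorname{Hom}_K\bigl((\mathcal H_\rho\otimes\mathcal H_\sigma)^\infty,\C\bigr),$$
the $K$-invariant distribution vectors correspond to $K$-intertwiners $(\mathcal H^\infty_{\rho}\otimes\mathcal H_\sigma)\to\C$, which in turn correspond to intertwiners of $\omega_\rho$ into $\bar\sigma$ in the distributional sense. The dimension of $\mathcal H^{-\infty}_K$ is therefore the (generalized) multiplicity of $\bar\sigma$ in $\omega_\rho$.

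For the implication ``multiplicity free $\Rightarrow$ generalized Gelfand'', we use the direct integral decomposition $\omega_\rho=\int^\oplus_{\hat K} m(\tau)\,\tau\,d\mu(\tau)$. For type I groups $K$ (the paper will use $K\cong\R^{d+1}$, where the $\tau$ are characters), multiplicity freeness $m\le1$ forces every such intertwiner space to have dimension at most $1$. For the converse, if $m(\tau)\ge2$ on a set of positive measure, the choice $\sigma=\bar\tau$ produces at least a two-dimensional space of invariant distribution vectors. A cleaner route to the converse is to invoke the equivalence between the generalized Gelfand property and commutativity of the algebra of $K$-bi-invariant distributions, and to pass through $\operatorname{End}_K(\omega_\rho)$, which is commutative if and only if $\omega_\rho$ is multiplicity free.

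The main obstacle is the second step. Matching distribution-vector multiplicities with multiplicities in a direct integral is not automatic for non-compact $K$. Here I would argue via the Gelfand--Kostyuchenko / Penney abstract Plancherel theorem to identify $K$-invariant distribution vectors of $\omega_\rho\otimes\sigma$ with measurable families of intertwiners. Alternatively, I would reduce to the equivalence, due to van Dijk and Thomas, between the generalized Gelfand property and commutativity of $\operatorname{End}_G$ of $L^2(G/K)$. Restricting $L^2(G/K)\cong L^2(N)$ along the Plancherel decomposition of $N$ turns this into commutativity of $\operatorname{End}_K(\omega_\rho)$ for almost every $\rho$, which is exactly multiplicity freeness.
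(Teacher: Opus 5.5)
Your first step (Mackey theory with stabilizer all of $K$, giving $\pi_{\rho,\sigma}$) and the reduction to $K$-covariant distribution vectors match the paper. The gap is in the second step, in the direction ``multiplicity free $\Rightarrow \dim\mathcal{H}^{-\infty}_K\le 1$'', which is the direction Theorem~\ref{maintheorem} needs. You assert that $m\le 1$ forces every intertwiner space to be at most one-dimensional. But $m$ is only defined $\mu$-almost everywhere, while the generalized Gelfand condition must hold at \emph{every} $\sigma$ and depends on the topology of $\mathcal{H}^\infty$. The Gelfand--Kostyuchenko/Penney theorem only produces continuous fibre functionals for almost every $\tau$. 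That can serve the converse direction, but it gives no upper bound at a given $\sigma$.

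In fact the per-$\rho$ implication you rely on is false. Take $d=2$ and $K'=\{\mathbf{k}\in K_2\mid k_2=0\}\cong\R^2$. On $\rho_{\alpha,\lambda}$, the operator $\omega_{\alpha,\lambda}(\mathbf{k})$ is multiplication by $e^{i\lambda(k_1u_1-k_0u_1u_2)}$. The map $u\mapsto(u_1u_2,u_1)$ is a diffeomorphism of $\{u_1\neq 0\}$ onto its image, so these operators generate $L^\infty(\R^2)$ and $\omega_{\alpha,\lambda}|_{K'}$ is multiplicity free. Yet every $\delta(u_1)\otimes\phi(u_2)$ with $\phi\in\mathcal{S}'(\R)$ lies in $\mathcal{H}^{-\infty}=\mathcal{S}'(\R^2)$ (the smooth vectors are $\mathcal{S}(\R^2)$). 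Each such distribution is $K'$-fixed, because the multiplier equals $1$ on $\{u_1=0\}$. Hence $\dim\mathcal{H}^{-\infty}_{K'}=\infty$ for the irreducible representation $\pi_{\rho_{\alpha,\lambda},1}$ of $K'\ltimes N_2$, and no argument using only multiplicity-freeness of $\omega_\rho$ can close this step.

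Your fallback routes do not repair this.
\begin{itemize}
\item For non-compact $K$ the $K$-bi-invariant distributions do not in general form a convolution algebra, and van Dijk's involution criterion is only sufficient. So there is no equivalence there to invoke.
\item Commutativity of $\operatorname{End}_G(L^2(G/K))$ does not imply the generalized Gelfand property. In the example above it holds, since $L^2(G/K')\cong L^2(N_2)=\int^\oplus \pi_{\rho,1}\otimes\overline{\omega_\rho}\,d\mu(\rho)$ and $\omega_\rho|_{K'}$ is multiplicity free for every generic $\rho$.
\item $L^2(N)$ only sees Plancherel-almost every $\rho$; for instance it never sees the $\lambda=0$ representations. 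The statement, however, concerns every $\rho\in\hat N$.
\end{itemize}
The paper delegates this step to the Mokni--Thomas theorem. In its application, the bound actually comes from the explicit form of $\omega_{\alpha,\lambda}$: the $k$-part of $K_d$ acts by the full family of modulations $e^{i\lambda ku}$, so every covariant tempered distribution is a multiple of a single $\delta_{u_0}$. This is exactly what fails for $K'$.
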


\begin{proof}
It follows from Mackey theory that
the representations
\begin{gather*}
    \pi_{\sigma,\rho}(k,n)=\sigma(k)\otimes \omega_\rho(k)\rho(n)
\end{gather*}
with $\sigma\in \hat{K}$ and $\rho\in \hat{N}$
form the complete set of irreducible unitary representations of $K\ltimes N$.
Therefore, by definition, $(K\ltimes N,K)$ is a generalized Gelfand pair if and only if for any $\pi_{\sigma,\rho}$,
the space of distribution vectors fixed by $K$ is at most one dimensional.
Note that the representation $\pi_{\sigma,\rho}$ of $K\ltimes N$ has a distribution vector fixed by $K$ if and only if the representation $\sigma\otimes \omega_\rho$ of $K$ has a distribution vector fixed by $K$.
By the result of Mokni and Thomas 
(see \cite[Theorem (B)]{gallo2020generalized}),
the representation $\sigma\otimes \omega_\rho$ has a distribution vector fixed by $K$ if and only if the dual representation $\sigma^*$ of $K$ appears in the decomposition of $\omega_\rho$ into irreducibles.
Therefore, the statement follows.
\end{proof}

\section{Construction of families of generalized Gelfand pairs}

Here we generalize the results of Gallo and Saal \cite{gallo2020generalized} to construct a new family $\{(K_{d}\ltimes N_{d},K_{d})\}_{d\ge 1}$ of generalized Gelfand pairs associated to $3$-step nilpotent Lie groups $N_{d}$. The generalized Gelfand pair constructed in \cite{gallo2020generalized} corresponds to the case of $d=1$ in our family.

\subsection{A family $\{N_{d}\}_{d\ge 1}$ of $3$-step nilpotent Lie groups}\label{sec:family}
Here, we construct a family $\{N_{d}\}_{d\ge 1}$ of $3$-step nilpotent Lie groups $N_d$, which can be deduced from the work of Ratcliff in \cite{ratcliff1985symbols}.

Let $d\ge 1$.
Let $H_{d}$ be the $(2d+1)$-dimensional Heisenberg group.
An element of $H_{d}$ is written as 
$(x,y,t)=(x_{1},\dots,x_{d},y_{1},\dots,y_{d},t)\in \R^{2d+1}$, and the multiplication of elements of $H_{d}$ is given by
\begin{gather*}
    (x,y,t)\cdot (x',y',t')=\left(x+x',y+y',t+t'+\frac{x y'-y x'}{2}\right),
\end{gather*}
where $x y=\sum_{i=1}^{d}x_i y_i$ for $x=(x_1,\dots,x_d),y=(y_1,\dots,y_d)\in \R^d$.

Let $S$ be a subgroup of $\Sp(2d,\R)$ defined by
$$S=\left\{\s=\begin{pmatrix}
I_d &     0\\ 
s J_d &      I_d
\end{pmatrix} \middle \vert s\in \R\right\}\cong \R,$$
where $I_d$ is the identity matrix and $J_d=(\delta_{i,d-j+1})_{1\le i,j\le d}$ is an anti-diagonal matrix.

Define an action of $S$ on $H_{d}$ by
\begin{gather*}
    \s\cdot (x,y,t)=(x,y+sx_{\uparrow},t),
\end{gather*}
where $x_{\uparrow}=J_d x=(x_{d},x_{d-1},\dots,x_{1})$.
This action of $S$ on $H_{d}$ induces the semi-direct product $N_{d}=S\ltimes H_{d}$, where an element of $N_d$ is written as $(s,x,y,t)\in \R^{2d+2}$, the multiplication is given by
\begin{gather*}
    (s,x,y,t)(s',x',y',t')=\left(s+s',x+x',y+y'+sx'_{\uparrow},t+t'+\frac{xy'-yx'+sxx'_{\uparrow}}{2}\right),
\end{gather*}
the unit element is $(0,0,0,0)$ and the inverse element of $(s,x,y,t)$ is given by
\begin{gather*}
    (s,x,y,t)^{-1}=(-s,-x,-y+sx_{\uparrow},-t).
\end{gather*}

Let $\mathfrak{h}_d$ be the Heisenberg Lie algebra corresponding to $H_d$.
We write an element of $\mathfrak{h}_d$ as $(X,Y,T)$ with $X,Y\in \R^d, T\in \R$. Then the Lie bracket in $\mathfrak{h}_d$ is given by
\begin{gather*}
    [(X,Y,T),(X',Y',T')]=\left(0,0,X Y'-Y X'\right).
\end{gather*}
Let $\mathfrak{s}$ denote the $1$-dimensional Lie algebra associated to the Lie group $S$.
Then the Lie algebra $\mathfrak{n}_d$ associated to the Lie group $N_d$ is isomorphic to the semi-direct product $\mathfrak{s}\ltimes \mathfrak{h}_d$ of $\mathfrak{s}$ and $\mathfrak{h}_d$.
An element of $\mathfrak{n}_d$ is written as $(S,X,Y,T)$ with $S\in \mathfrak{s}, (X,Y,T)\in \mathfrak{h}_d$.
Then the Lie bracket of $\mathfrak{n}_d$ is given by
\begin{gather*}
    [(S,X,Y,T),(S',X',Y',T')]=\left(0,0,SX'_{\uparrow}-S'X_{\uparrow},XY'-YX'\right),
\end{gather*}
where $X_{\uparrow}=J_d X$.
It is easy to see that the center $\mathfrak{c}_d$ of $\mathfrak{n}_d$ is $$\mathfrak{c}_d=\{(0,0,0,T)\mid T\in \R\}\cong \R$$
and that the Lie algebra $\mathfrak{n}_d$ is a $3$-step nilpotent Lie algebra since we have
\begin{gather*}
    [(0,0,Y,T),(S',X',Y',T')]=\left(0,0,0,-YX'\right).
\end{gather*}

\subsection{The family of generalized Gelfand pairs}

Here we construct a new family $\{(K_{d}\ltimes N_{d},K_{d})\}_{d\ge 1}$ of generalized Gelfand pairs.

Let $d\ge 1$.
Define a subgroup $K_d$ of $\GL(2d+2,\R)$ by
\begin{gather*}
K_d=\left\{\mathbf{k}=\begin{pmatrix}
1&0& 0&0\\
0&I_d& 0&0\\
0&k_0 J_d & I_d&0\\
0&k&0&1
\end{pmatrix}
\middle \vert 
\substack{k_0,k_1,\dots,k_d\in \R\\ k=(k_1,\dots,k_d)}
\right\}.
\end{gather*}
Then $K_d$ is isomorphic to $\R^{d+1}$ as abelian groups, and we identify $\mathbf{k}\in K_d$ with $(k_0,k_1,\cdots, k_d)\in \R^{d+1}$.

\begin{lemma}
For $d\ge 1$, $K_{d}$ is a subgroup of $\Aut(N_{d})$.
\end{lemma}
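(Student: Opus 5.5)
The plan is to make the action explicit and then check directly that each $\mathbf{k}\in K_d$ respects the group law of $N_d$. We let $\mathbf{k}$ act on $N_d$ by multiplying the column vector $(s,x,y,t)$ by the matrix. Reading off the block rows gives
\begin{gather*}
\mathbf{k}\cdot(s,x,y,t)=\left(s,\;x,\;y+k_0x_{\uparrow},\;t+kx\right),
\end{gather*}
where $kx=\sum_{i=1}^d k_ix_i$.

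The first step is to confirm that this is a group action by bijections. Multiplying two matrices of $K_d$, the lower-left blocks add: the $(3,2)$ block becomes $(k_0+k_0')J_d$ and the $(4,2)$ block becomes $k+k'$. The reason is that the $(4,3)$ block of each factor is $0$, so no cross term appears. Hence $K_d$ is closed under multiplication, and $\mathbf{k}\mapsto(k_0,k)$ is an isomorphism onto $\R^{d+1}$. In particular, each $\mathbf{k}$ is a bijection of $N_d=\R^{2d+2}$ with inverse given by $(-k_0,-k)$, and $K_d$ acts faithfully. It therefore suffices to show that each $\mathbf{k}$ is a homomorphism $N_d\to N_d$. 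Smoothness is automatic, since the map is linear in the coordinates.

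The main step is the homomorphism identity $\mathbf{k}\cdot(gg')=(\mathbf{k}\cdot g)(\mathbf{k}\cdot g')$ for $g=(s,x,y,t)$ and $g'=(s',x',y',t')$, using the multiplication formula of Section~\ref{sec:family}. Applying $\mathbf{k}$ to $gg'$ gives
\begin{gather*}
\Bigl(s+s',\;x+x',\;y+y'+sx'_{\uparrow}+k_0(x+x')_{\uparrow},\;t+t'+\tfrac{xy'-yx'+sxx'_{\uparrow}}{2}+k(x+x')\Bigr).
\end{gather*}
Multiplying $\mathbf{k}\cdot g$ by $\mathbf{k}\cdot g'$ gives the same first three coordinates. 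In the last coordinate it gives
\begin{gather*}
t+kx+t'+kx'+\tfrac12\Bigl(x(y'+k_0x'_{\uparrow})-(y+k_0x_{\uparrow})x'+sxx'_{\uparrow}\Bigr).
\end{gather*}
The $s$-terms and the linear $k$-terms match. The only point needing care is that the extra terms $k_0\bigl(x\,x'_{\uparrow}-x_{\uparrow}x'\bigr)/2$ cancel. This holds because $J_d$ is symmetric, so $x\cdot J_dx'=J_dx\cdot x'$.

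That cancellation is the only real obstacle. It is exactly why the anti-diagonal $J_d$ was chosen: $k_0J_d$ must be a symmetric block, which makes the shear $y\mapsto y+k_0x_{\uparrow}$ symplectic. It must also commute with the $S$-action, which holds because the $S$-action uses the same block $sJ_d$. Once this is verified, each $\mathbf{k}$ is an automorphism of $N_d$, and so $K_d\le\Aut(N_d)$.
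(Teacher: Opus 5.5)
Your proof is correct and is essentially the paper's argument. The paper defines the same map $\rho_d(\mathbf{k})(s,x,y,t)=(s,x,y+k_0x_{\uparrow},t+kx)$, verifies the homomorphism identity by the same coordinate computation using $xx'_{\uparrow}=x'x_{\uparrow}$, checks compatibility with the group law of $K_d$ (you do this via block multiplication of the matrices, which is equivalent), and concludes with faithfulness. The only caveat is that your closing explanation of why $J_d$ was chosen is heuristic and not needed for the proof: commuting with the $S$-action is automatic here because two shears $y\mapsto y+Ax$ and $y\mapsto y+Bx$ always commute, and any symmetric block in place of $k_0J_d$ would equally give automorphisms.
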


\begin{proof}
 We have an action 
 \begin{gather*}
     \rho_d: K_d\to \Aut(N_d)
 \end{gather*}
 of $K_d$ on $N_d$ defined by
 \begin{gather*}
     \rho_d(\mathbf{k})((s,x,y,t))=(s,x,y+k_0 x_{\uparrow},t+k x)
 \end{gather*}
 for $\mathbf{k}\in K_d$ and $(s,x,y,t)\in N_d$.
 One can check that $\rho_d$ is an action since we have
 \begin{gather*}
 \begin{split}
     &\rho_d(\mathbf{k})((s,x,y,t)(s',x',y',t'))\\
     &=\rho_d(\mathbf{k})(s+s',x+x',y+y'+sx'_{\uparrow},t+t'+\frac{x y'-y x' +sx x'_{\uparrow}}{2})\\
     &=(s+s',x+x',y+y'+sx'_{\uparrow}+k_0 (x+x')_{\uparrow},t+t'+\frac{x y'-y x'+s x x'_{\uparrow}}{2}+k (x+x'))\\
     &=\rho_d(\mathbf{k})((s,x,y,t))\rho_d(\mathbf{k})((s',x',y',t')),
 \end{split}
 \end{gather*}
 where we use $xx'_{\uparrow}=x'x_{\uparrow}$,
 and
 \begin{gather*}
     \begin{split}
         \rho_d(\mathbf{k}\mathbf{k'})((s,x,y,t))
         &=(s,x,y+(k_0+k'_0)x,t+(k+k')x)\\
         &=\rho_d(\mathbf{k})(\rho_d(\mathbf{k'})((s,x,y,t))).
     \end{split}
 \end{gather*}
 
 In order to prove that $K_d$ is a subgroup of $\Aut(N_d)$, it suffices to prove that $\rho_d$ is injective.
 If $\rho_d(\mathbf{k})((s,x,y,t))=(s,x,y,t)$ for all $(s,x,y,t)\in N_{d}$, then we have 
 $\rho_d(\mathbf{k})((s,x,y,t))=(s,x,y+k_0 x_{\uparrow},t+kx)=(s,x,y,t)$, which implies that $k_i=0$ for any $i=0,\cdots, d$.
 This completes the proof.
\end{proof}

The following is the main theorem, which we prove in the rest of this paper.

\begin{theorem}\label{maintheorem}
    For each $d\ge 1$, the pair $(K_d\ltimes N_d,K_d)$ is a generalized Gelfand pair.
\end{theorem}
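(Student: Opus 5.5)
We will apply Proposition \ref{GalloSaalcriterion}. The plan has three steps: classify $\hat{N}_d$ by Kirillov's orbit method, realize each $\rho\in\hat N_d$ concretely, and then for each $\rho$ write down $\omega_\rho$ explicitly and check that it is multiplicity free. Since $K_d\cong\R^{d+1}$ is abelian, every irreducible unitary representation of $K_d$ is a character $\chi_\xi(\mathbf{k})=e^{i\xi\cdot(k_0,\dots,k_d)}$. Multiplicity freeness of $\omega_\rho$ therefore means that, after diagonalizing $\omega_\rho$ by a Fourier-type transform, the joint spectral measure on $\widehat{K_d}\cong\R^{d+1}$ has multiplicity one, i.e.\ the spectral multiplicity function is at most one almost everywhere.

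\textbf{Step 1: the unitary dual.} The representations of $N_d$ split into those trivial on the center $\mathfrak{c}_d$ and the generic ones with central character $e^{i\lambda t}$, $\lambda\neq0$.

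For the generic ones, the subalgebra $\mathfrak{a}=\{(0,0,Y,T)\}$ is an abelian ideal of codimension $d+1$. Inducing from characters of $A=\exp\mathfrak a$ gives representations on $L^2(\R^{d+1})$ in the variables $(s,x)$. In coordinates, $(s',x',y',t')$ acts on $f(s,x)$ by translation in $(s,x)$, multiplied by a phase of the form
\[
e^{i\lambda\left(t'+\cdots+\left(x y' + s\,x\,x'_{\uparrow}\right)/2+\cdots\right)}.
\]
Conjugating by the coadjoint action, one normalizes the $\mathfrak{a}^*$ parameter to $(\eta,\lambda)$ with only $\lambda\ne0$ and possibly one residual parameter surviving. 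We will compute the coadjoint orbits to determine exactly which parameters survive.

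The representations trivial on the center factor through $N_d/C\cong S\ltimes\R^{2d}$, a $2$-step group. These are handled the same way, together with characters.

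\textbf{Step 2: the intertwining representation.} The element $\mathbf{k}$ acts on $N_d$ by $(s,x,y,t)\mapsto(s,x,y+k_0x_\uparrow,t+kx)$. This fixes the $(s,x)$-coordinates and changes only the $A$-part by a map depending on $x$. It is therefore natural to guess that $\omega_\rho(\mathbf{k})$ is a multiplication operator on $L^2(\R^{d+1}_{s,x})$:
\[
\omega_\rho(\mathbf{k})f(s,x)=e^{i\lambda\,\phi_{\mathbf{k}}(s,x)}f(s,x),
\]
with $\phi_{\mathbf{k}}$ linear in $\mathbf{k}$, roughly $\phi_{\mathbf{k}}(s,x)=k\cdot x+\tfrac{k_0}{2}\,x\,x_\uparrow+(\text{terms from }s)$. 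We verify the intertwining identity $\rho(\mathbf{k}\cdot n)=\omega_\rho(\mathbf{k})\rho(n)\omega_\rho(\mathbf{k})^{-1}$ directly on the group law, using $xx'_\uparrow=x'x_\uparrow$ as in the lemma above. Then $\omega_\rho$ is the direct integral over $(s,x)$ of the characters $\mathbf{k}\mapsto e^{i\lambda\Phi(s,x)\cdot\mathbf{k}}$, where
\[
\Phi(s,x)=\Bigl(\tfrac12 x\,x_\uparrow+c(s,x),\,x_1,\dots,x_d\Bigr)\in\R^{d+1}
\]
for a suitable correction $c(s,x)$ that the computation will determine.

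\textbf{Step 3 (main obstacle): multiplicity one.} The representation $\omega_\rho$ is multiplicity free exactly when the map $\Phi:\R^{d+1}\to\R^{d+1}$ is injective off a null set. Injectivity in $x$ is immediate, since the last $d$ coordinates recover $x$. The crux is the $s$-dependence of the first coordinate. If the $k_0$-phase involves $s$ only through an expression like $x\,x_\uparrow$, it is independent of $s$, which would give infinite multiplicity; in that case the realization has to be changed. We would first try inducing instead from the polarization containing $\mathfrak{s}$ and $Y$, so that the model space becomes $L^2$ in $(x,\text{one residual variable})$ with the residual coordinate paired against $k_0$. We expect the coadjoint orbit computation to show that generic orbits have dimension $2d$, so that the model is $L^2(\R^d)$ in $x$ alone. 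Then $\Phi(x)=(q(x),x)$ is injective, and $\omega_\rho$ is multiplicity free because it is a direct integral of distinct characters over a graph.

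The degenerate orbits (with $\lambda=0$, which live on lower-dimensional models) are treated the same way. There $\omega_\rho$ is either a character or again a multiplication representation by an injective map. The case $d=1$ should reproduce the computation of Gallo and Saal and serves as a check.
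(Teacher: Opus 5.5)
Once you commit to the model you reach at the end of Step 3, your treatment of the generic orbits ($\lambda\neq0$) is the paper's. It uses the polarization $\{(S,0,Y,T)\}$ and the model $L^2(\R^d)$ in $x$. The intertwiner $\omega_{\alpha,\lambda}(\mathbf{k})$ is multiplication by $e^{i\lambda(-\frac12 k_0\,uu_\uparrow+k\cdot u)}$, checked on generators via $(0,x,k_0x_\uparrow,0)=(0,x,0,0)(0,0,k_0x_\uparrow,-\frac{k_0}{2}xx_\uparrow)$. Multiplicity one follows from injectivity of $u\mapsto(-\frac{\lambda}{2}uu_\uparrow,\lambda u)$.

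Step 1, however, should be discarded rather than repaired, and the diagnosis in Step 3 is off. The generic orbit $\{(\alpha-\frac{1}{2\lambda}\nu\nu_\uparrow,\mu,\nu,\lambda)\}$ is $2d$-dimensional, so a polarization has dimension $d+2$, whereas $\mathfrak{a}=\{(0,0,Y,T)\}$ has dimension $d+1$. The representation on $L^2(\R^{d+1})$ induced from $\exp\mathfrak{a}$ is therefore reducible: it is a direct integral $\int^\oplus\rho_{\alpha,\lambda}\,d\alpha$ in which $s$ is Fourier-dual to the orbit invariant $\alpha$. That is why your $k_0$-phase came out independent of $s$. It is not that some realizations of $\rho$ are unsuitable. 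For irreducible $\rho$, Schur's lemma makes $\omega_\rho$ unique up to a character of $K_d$, so multiplicity freeness does not depend on the realization. The $L^2(\R^{d+1})$ model simply is not an element of $\hat{N_d}$.

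For the degenerate orbits, ``treated the same way'' has to be made explicit. Your remark that they live on lower-dimensional models is right, and here your route departs from the paper's to your advantage. For $\Lambda=(0,\mu,\nu,0)$ with $\nu\neq0$, the form $B_\Lambda(n,n')=S\,\nu_\uparrow\cdot X'-S'\,\nu_\uparrow\cdot X$ has rank $2$ for every $d$. So the orbit $\{(\alpha,\mu-s\nu_\uparrow,\nu,0)\}$ is $2$-dimensional, and $\{(S,X,Y,T)\mid \nu_\uparrow\cdot X=0\}$ is a polarization of dimension $2d+1$.

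The paper reuses $\{(S,0,Y,T)\}$, which is maximal isotropic for this $\Lambda$ only when $d=1$. For $d\ge2$ its $L^2(\R^d)$ model is reducible. Its $\omega_{\mu,\nu}$, multiplication by $e^{ik_0\,\nu_\uparrow\cdot u}$, then has infinite multiplicity by your own injectivity criterion.

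The correct model goes as follows. Fix $e\in\R^d$ with $\nu_\uparrow\cdot e=1$ and realize $\rho$ on $L^2(\R)$ via $u\mapsto(0,ue,0,0)$. Then:
\begin{itemize}
\item $\rho(s,0,0,0)$ is multiplication by $e^{isu}$;
\item $\rho(0,x,0,0)f(u)=e^{i\mu\cdot(x-ae)}f(u-a)$ with $a=\nu_\uparrow\cdot x$;
\item $\rho(0,0,y,0)=e^{i\nu\cdot y}$;
\item $\rho(0,0,0,t)=1$.
\end{itemize}
Since $\rho(\mathbf{k}\cdot(0,x,0,0))=e^{ik_0a}\rho(0,x,0,0)$, the operator $\omega(\mathbf{k})f(u)=e^{ik_0u}f(u)$ intertwines. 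It is multiplicity free because $u\mapsto(u,0,\dots,0)$ is injective.

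For $\lambda=0$ and $\nu=0$, $\rho$ is the one-dimensional character $e^{i(\alpha s+\mu\cdot x)}$ on $\C$ and $\omega$ can be taken trivial. Here too the paper's $L^2(\R^d)$ model is not the irreducible one. With these cases written out, your outline gives a complete proof.
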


\begin{remark}
    If $d=1$, then the pair $(K_1\ltimes N_1,K_1)$ recovers the example of Gallo and Saal \cite{gallo2020generalized}.
\end{remark}

\subsection{Kirillov's theory}
We briefly review Kirillov's theory \cite{kirillov2025lectures} which establishes a bijection between the set $\hat{N_d}$ of equivalence classes of irreducible unitary representations of $N_d$ and the set of coadjoint orbits for the real dual space $\mathfrak{n}_d^*$ of $\mathfrak{n}_d$.

The adjoint action of $N_{d}$ on $\mathfrak{n}_{d}$
\begin{gather*}
    \Ad: N_{d}\to \Aut(\mathfrak{n}_{d})
\end{gather*}
is given for $g=(s,x,y,t)\in N_{d}$, $n=(S,X,Y,T)\in \mathfrak{n}_{d}$ by
\begin{gather*}
    \Ad(g)(n)=(S,X,Y+s X_{\uparrow}-S x_{\uparrow},T+xY-yX+sx X_{\uparrow}-\frac{S}{2}x x_{\uparrow}),
\end{gather*}
and thus the coadjoint action of $N_{d}$ on $\mathfrak{n}_{d}^*$
\begin{gather*}
    \Ad^*: N_{d}\to \GL(\mathfrak{n}_{d}^*)
\end{gather*}
is given for $g=(s,x,y,t)\in N_{d}$, $\Lambda=(\alpha,\mu,\nu,\lambda)\in \mathfrak{n}_{d}^*$ by
\begin{gather}\label{coadjointaction}
    \Ad^*(g)(\Lambda)
    =(\alpha+\nu x_{\uparrow}-\frac{\lambda}{2}x x_{\uparrow}, \mu-s\nu_{\uparrow}+\lambda y, \nu-\lambda x,\lambda).
\end{gather}
The coadjoint orbit $\calO_{\Lambda}$ for $\Lambda\in \mathfrak{n}_{d}^*$ is defined by 
\begin{gather*}
    \calO_{\Lambda}=\{\Ad^*(g)(\Lambda)\in\mathfrak{n}_{d}^*\mid g\in N_{d}\}\subset \mathfrak{n}_{d}^*.
\end{gather*}

For $\Lambda\in \mathfrak{n}_d^*$, we have a skew-symmetric form
\begin{gather*}
B_{\Lambda}: \mathfrak{n}_d\times \mathfrak{n}_d\to \R
\end{gather*}
defined for $n,n'\in \mathfrak{n}_d$ by $B_{\Lambda}(n,n')=\Lambda([n,n'])$.
Let $\mathfrak{M}_{\Lambda}\subset \mathfrak{n}_d$ be a maximal isotropic subspace associated to $B_\Lambda$, where a subspace is isotropic if the restriction of $B_{\Lambda}$ vanishes.
Let $M_{\Lambda}=\exp(\mathfrak{M}_{\Lambda})$, 
and let $\chi_{\Lambda}: M_{\Lambda}\to \C$ be the character defined for $n\in \mathfrak{M}_{\Lambda}$ by
$\chi_{\Lambda}(\exp(n))=e^{i \Lambda(n)}$.
Then the irreducible unitary representation of $N_{d}$ that corresponds to the coadjoint orbit $\mathcal{O}_\Lambda$ is
\begin{gather*}
    (\rho_{\Lambda}, H_{\Lambda})=\operatorname{Ind}^{N_{d}}_{M_{\Lambda}} (\chi_{\Lambda}),
\end{gather*}
where $\rho_{\Lambda}$ is defined by 
\begin{gather*}
    (\rho_{\Lambda}(n))(f)(n')=f(n^{-1}n'), \quad n,n'\in N_{d}, f\in H_\Lambda,
\end{gather*}
and where $H_{\Lambda}$ is the completion of 
\begin{gather*}
   \{f\in C_{c}(N_{d})\mid f(nm)=\chi_{\Lambda}(m^{-1})f(n),\; m\in M_{\Lambda}, n\in N_{d}\} 
\end{gather*}
with respect to the inner product $\langle, \rangle: C_{c}(N_{d})\times C_{c}(N_{d})\to \C$ defined by $\langle f,g \rangle=\int_{N_{d}/M_{\Lambda}} f(u)\overline{g(u)}du$.

\subsection{A classification of coadjoint orbits for $\mathfrak{n}_d^*$}

In \cite{ratcliff1985symbols}, Ratcliff studied the \emph{generic} coadjoint orbits of any simply-connected Lie group whose Lie algebra is a $3$-step nilpotent Lie algebra with $1$-dimensional center. 
Here, we explicitly describe the coadjoint orbits $\calO_{\Lambda}$ with $\Lambda=(\alpha,\mu,\nu,\lambda)\in \mathfrak{n}_{d}^*$.

Note that the relation \eqref{coadjointaction} implies that the last coordinate $\lambda$ is preserved under the coadjoint action; that is, for each $\Lambda$, all elements of the coadjoint orbit $\calO_{\Lambda}$ have the same last coordinate. 

We first consider the case of \emph{generic orbits}, i.e., $\lambda\neq 0$.
Considering the action of $g=(0,\frac{1}{\lambda}\nu, -\frac{1}{\lambda}\mu,0)$, we see that the orbit $\mathcal{O}_{\Lambda}$ includes $(\alpha+\frac{1}{2\lambda}\nu\nu_{\uparrow},0,0,\lambda)$.
Therefore, in this case, we can take $(\alpha,0,0,\lambda)$ as a representative of the coadjoint orbit, and we obtain
\begin{gather*}
    \calO_{(\alpha,0,0,\lambda)}=\{(\alpha-\frac{1}{2\lambda}\nu \nu_{\uparrow},\mu,\nu,\lambda)\mid \mu,\nu\in \R^d\}. 
\end{gather*}

We next consider the case of non-generic orbits, i.e., $\lambda=0$.
In this case, the last two coordinates $\nu$ and $\lambda$ are preserved.
If $\nu=0$, then $(\alpha,\mu,0,0)$ is preserved under the coadjoint action; that is, the orbit $\mathcal{O}_{(\alpha,\mu,0,0)}$ consists of just one element $(\alpha,\mu,0,0)$.
Otherwise, we have $\nu_i\neq 0$ for some $1\le i\le d$, and thus we see that the orbit $\mathcal{O}_{\Lambda}$ includes $(0,\mu,\nu,0)$, considering the action of $g=(0,x,0,0)$ with $x_{d+1-i}=-\frac{1}{\nu_i}\alpha$, $x_j=0$ for $j\neq d+1-i$.
Therefore, in this case, we can take $(0,\mu,\nu,0)$ as a representative of the coadjoint orbit, and we obtain
\begin{gather*}
    \calO_{(0,\mu,\nu,0)}=\{(\alpha,\mu-s \nu_{\uparrow},\nu,0)\mid \alpha,s\in \R\}.
    \end{gather*}

Therefore, we obtain the following classification of the coadjoint orbits for $\mathfrak{n}_d^*$.
\begin{lemma}
We have 
$$\{\mathcal{O}_{\Lambda}\mid \Lambda\in \mathfrak{n}_d^*\}=
\{\mathcal{O}_{(\alpha,0,0,\lambda)}\mid \lambda\neq 0\}\sqcup 
\{\mathcal{O}_{(0,\mu,\nu,0)}\mid \nu\neq 0\}\sqcup
\{\mathcal{O}_{(\alpha,\mu,0,0)}\},$$
where
\begin{gather*}
    \calO_{(\alpha,0,0,\lambda)}=\{(\alpha-\frac{1}{2\lambda}\nu \nu_{\uparrow},\mu,\nu,\lambda)\mid \mu,\nu\in \R^d\},\\
    \calO_{(0,\mu,\nu,0)}=\{(\alpha,\mu-s \nu_{\uparrow},\nu,0)\mid \alpha,s\in \R\},\quad \mathcal{O}_{(\alpha,\mu,0,0)}=\{(\alpha,\mu,0,0)\}.
\end{gather*}
\end{lemma}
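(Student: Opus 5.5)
The plan is to compute every orbit directly from the coadjoint formula \eqref{coadjointaction} and split into cases according to the invariants of the action. From \eqref{coadjointaction}, the last coordinate $\lambda$ is constant on each orbit. When $\lambda=0$, the coordinate $\nu$ is constant as well, since $\nu-\lambda x=\nu$. This gives three mutually exclusive cases:
\begin{enumerate}[(i)]
\item $\lambda\neq0$;
\item $\lambda=0$ and $\nu\neq0$;
\item $\lambda=0$ and $\nu=0$.
\end{enumerate}
Every orbit lies in exactly one of these cases, so the union on the right-hand side is automatically disjoint. It then remains to show two things in each case: every $\Lambda$ is conjugate to a representative of the stated form, and the orbit of that representative is exactly the stated set.

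In case (i), let $g=(s,x,y,t)$ act on $(\alpha,0,0,\lambda)$. By \eqref{coadjointaction} this gives $(\alpha-\frac{\lambda}{2}xx_{\uparrow},\lambda y,-\lambda x,\lambda)$. Set $\nu=-\lambda x$ and $\mu=\lambda y$. Since $\lambda\neq0$, these run over all of $\R^d\times\R^d$ as $x,y$ vary. The first coordinate becomes $\alpha-\frac{1}{2\lambda}\nu\nu_{\uparrow}$, because $xx_{\uparrow}=\lambda^{-2}\nu\nu_{\uparrow}$. This is exactly the displayed set $\calO_{(\alpha,0,0,\lambda)}$. For an arbitrary $\Lambda=(\alpha,\mu,\nu,\lambda)$, the same set description shows $\Lambda\in\calO_{(\alpha',0,0,\lambda)}$ with $\alpha'=\alpha+\frac{1}{2\lambda}\nu\nu_{\uparrow}$. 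This is consistent with the explicit element $g=(0,\frac1\lambda\nu,-\frac1\lambda\mu,0)$ used above.

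In case (ii), let $g$ act on $(0,\mu,\nu,0)$. This gives $(\nu x_{\uparrow},\mu-s\nu_{\uparrow},\nu,0)$. Since $\nu\neq0$, the linear functional $x\mapsto\nu x_{\uparrow}$ is surjective onto $\R$, and it is independent of $s$. Hence the orbit is $\{(\alpha,\mu-s\nu_{\uparrow},\nu,0)\mid \alpha,s\in\R\}$. Any $(\alpha,\mu,\nu,0)$ with $\nu\neq0$ lies in $\calO_{(0,\mu,\nu,0)}$, either by this description or via the explicit $g=(0,x,0,0)$ given above.

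In case (iii), \eqref{coadjointaction} with $\nu=\lambda=0$ gives $\Ad^*(g)\Lambda=\Lambda$, so each such point is its own orbit. Putting the three cases together yields the stated equality of sets of orbits.

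No step is a real obstacle; the only points needing care are bookkeeping. First, the sign in the generic case: the representative's $\alpha$ is the first coordinate shifted by $+\frac{1}{2\lambda}\nu\nu_{\uparrow}$. Second, the equality is one of sets of orbits, not a bijection with parameters. In case (ii), two representatives $(0,\mu,\nu,0)$ and $(0,\mu',\nu,0)$ give the same orbit exactly when $\mu'-\mu\in\R\nu_{\uparrow}$. I would state this explicitly, since it is needed later when indexing $\hat N_d$.
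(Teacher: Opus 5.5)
Your argument is correct and is essentially the paper's: split by the orbit invariants $\lambda$ (and $\nu$ when $\lambda=0$), conjugate an arbitrary $\Lambda$ to a representative (the paper uses exactly the elements $(0,\frac{1}{\lambda}\nu,-\frac{1}{\lambda}\mu,0)$ and $(0,x,0,0)$), and read the orbits off \eqref{coadjointaction}; your version also writes out the orbit computations and the disjointness that the paper leaves implicit. Your closing remark that $\calO_{(0,\mu,\nu,0)}=\calO_{(0,\mu',\nu,0)}$ iff $\mu'-\mu\in\R\nu_{\uparrow}$ is correct, but it is not actually needed later: the proof of the main theorem only has to produce an $\omega_\Lambda$ for every representative, so redundancy in the parametrization is harmless.
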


\subsection{Irreducible unitary representations of $N_d$}

Let $\Lambda$ be of the form $(\alpha,0,0,\lambda)$ with $\lambda\neq 0$, $(0,\mu,\nu,0)$ with $\nu\neq 0$ or $(\alpha,\mu,0,0)$. 
Here we describe the irreducible representation $\rho_{\Lambda}$ of $N_{d}$ corresponding to the coadjoint orbit $\mathcal{O}_\Lambda$.

Since the skew-symmetric form $B_{\Lambda}: \mathfrak{n}_{d}\times \mathfrak{n}_{d}\to \R$ is 
\begin{gather*}
\begin{split}
    B_{\Lambda}((S,X,Y,T),(S',X',Y',T'))&=\Lambda([(S,X,Y,T),(S',X',Y',T')])\\
    &=\Lambda\left((0,0,S X'_{\uparrow}-S' X_{\uparrow}, X Y'-Y X')\right),
\end{split}
\end{gather*}
a maximal isotropic subspace associated to $\Lambda$ can be taken as 
\begin{gather*}
\mathfrak{M}_{(\alpha,0,0,\lambda)}=\mathfrak{M}_{(0,\mu,\nu,0)}=\{(S,0,Y,T)\mid S,T\in \R, Y\in \R^{d}\},\quad \mathfrak{M}_{(\alpha,\mu,0,0)}=\mathfrak{n}_d.
\end{gather*}
Since for $(s,x,y,t)\in N_{d}$, we have 
\begin{gather*}
    (s,x,y,t)=(0,x,0,0)\left(s,0,y,t-\frac{x y}{2}\right),
\end{gather*}
and since $(s,0,y,t-\frac{x y}{2})$ is an element of the above maximal isotropic subspace in any case, we can identify the representation space $H_{\Lambda}$ with $L^2(\R^{d})$ via the identification $(0,x,0,0)\mapsto x\in \R^d$.

Let $\rho_{\alpha,\lambda}$ (resp. $\rho_{\mu,\nu}$, $\rho_{\alpha,\mu}$) denote the representation $\rho_{(\alpha,0,0,\lambda)}$ (resp. $\rho_{(0,\mu,\nu,0)}$, $\rho_{(\alpha,\mu,0,0)}$).

\begin{lemma}\label{computationofrho}
 Let $f\in H_{\Lambda}\cong L^2(\R^d), u\in \R^d$.
 We have for $\Lambda=(\alpha,0,0,\lambda)$ with $\lambda\neq 0$,
    \begin{gather*}
        \begin{split}
            ((\rho_{\alpha,\lambda}(s,0,0,0))(f))(u)&=e^{i (\alpha s- \frac{1}{2}\lambda s u u_{\uparrow})} f(u),\\
            ((\rho_{\alpha,\lambda}(0,x,0,0))(f))(u)&=f(u-x),\\
            ((\rho_{\alpha,\lambda}(0,0,y,0))(f))(u)&=e^{-i \lambda u y} f(u),\\
            ((\rho_{\alpha,\lambda}(0,0,0,t))(f))(u)&=e^{i \lambda t} f(u)
        \end{split}
    \end{gather*}
    and for $\Lambda=(0,\mu,\nu,0)$ with $\nu\neq 0$,
    \begin{gather*}
        \begin{split}
            ((\rho_{\mu,\nu}(s,0,0,0))(f))(u)&=e^{i s\nu u_{\uparrow}} f(u),\\
            ((\rho_{\mu,\nu}(0,x,0,0))(f))(u)&=f(u-x),\\
            ((\rho_{\mu,\nu}(0,0,y,0))(f))(u)&=e^{i \nu y} f(u),\\
            ((\rho_{\mu,\nu}(0,0,0,t))(f))(u)&=f(u)
        \end{split}
    \end{gather*}
    and for $\Lambda=(\alpha,\mu,0,0)$,
     \begin{gather*}
        \begin{split}
            ((\rho_{\alpha,\mu}(s,0,0,0))(f))(u)&=e^{i \alpha s} f(u),\\
            ((\rho_{\alpha,\mu}(0,x,0,0))(f))(u)&=f(u-x),\\
            ((\rho_{\alpha,\mu}(0,0,y,0))(f))(u)&=f(u),\\
            ((\rho_{\alpha,\mu}(0,0,0,t))(f))(u)&=f(u).
        \end{split}
    \end{gather*}
\end{lemma}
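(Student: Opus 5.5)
The plan is a direct computation in the induced model. For each of the three families of $\Lambda$, we take the maximal isotropic subalgebra $\mathfrak{M}_\Lambda=\{(S,0,Y,T)\}$ (or $\mathfrak{n}_d$ in the third case) and use the section $x\mapsto(0,x,0,0)$ of $N_d\to N_d/M_\Lambda$. By the stated decomposition $(s,x,y,t)=(0,x,0,0)(s,0,y,t-\frac{xy}{2})$, an element $f\in H_\Lambda$ is determined by $F(u):=f((0,u,0,0))$. Explicitly,
\[
f((s,u,y,t))=\chi_\Lambda\bigl((s,0,y,t-\tfrac{uy}{2})\bigr)^{-1}F(u).
\]
Then $(\rho_\Lambda(n)f)(0,u,0,0)=f(n^{-1}(0,u,0,0))$. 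For each of the four one-parameter generators $n$, we will:
\begin{itemize}
\item compute $n^{-1}(0,u,0,0)$ using the inverse formula and the multiplication law of $N_d$;
\item rewrite the result as $(0,u',0,0)\,m$ with $m\in M_\Lambda$;
\item read off the factor $\chi_\Lambda(m)^{-1}$ multiplying $F(u')$.
\end{itemize}

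The necessary input is the character $\chi_\Lambda$ on $M_\Lambda$ in group coordinates. On $M_\Lambda$ the coordinate $x$ is $0$, so the group law restricted to $\{(s,0,y,t)\}$ is simply addition (all cross terms involve $x$). Hence $M_\Lambda$ is abelian and $\exp(S,0,Y,T)=(S,0,Y,T)$. This gives
\[
\chi_\Lambda(s,0,y,t)=e^{i(\alpha s+\mu\cdot 0+\nu y+\lambda t)},
\]
that is, $e^{i(\alpha s+\nu y+\lambda t)}$ with the relevant entries of $\Lambda$ specialized.

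The four generators are handled as follows.
\begin{itemize}
\item For $n=(0,x,0,0)$, we have $n^{-1}(0,u,0,0)=(0,u-x,0,0)$, since the $t$-term $-\frac{x u - 0}{2}\cdot 0$ vanishes. This gives translation in all cases.
\item For $n=(0,0,y,0)$, we have $n^{-1}(0,u,0,0)=(0,u,-y,\frac{uy}{2})$. Rewriting this as $(0,u,0,0)(0,0,-y,uy)$, the character factor is $e^{-i(-\nu y+\lambda uy)}$. This produces $e^{-i\lambda uy}$ when $\nu=0$ and $e^{i\nu y}$ when $\lambda=0$.
\item For $n=(0,0,0,t)$, the factor is $e^{i\lambda t}$.
\item For $n=(s,0,0,0)$, we get $n^{-1}(0,u,0,0)=(-s,u,-su_\uparrow,-\frac{s}{2}uu_\uparrow)$. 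Factoring out $(0,u,0,0)$ gives $m=(-s,0,-su_\uparrow,\tau)$ with $\tau=-\frac{s}{2}uu_\uparrow+\frac{u\cdot su_\uparrow}{2}$, up to sign conventions to be tracked carefully. Applying $\chi_\Lambda^{-1}$ then yields $e^{i(\alpha s+s\nu u_\uparrow+\cdots)}$ with the $\lambda$-dependent quadratic term $-\frac12\lambda s\,uu_\uparrow$.
\end{itemize}
The third family $(\alpha,\mu,0,0)$ is one-dimensional in spirit. Since $\mathfrak{M}=\mathfrak{n}_d$ there, we will instead interpret it via the same formulas with $\nu=\lambda=0$, which gives the stated pure character in $s$ and translation in $x$. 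We will note, as the lemma implicitly does, that $\mu$ enters only through the $x$-direction, consistent with the given formulas.

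The main obstacle is the $(s,0,0,0)$ computation. It involves the only genuinely 3-step cross term $sxx'_\uparrow/2$ together with the $-\frac{xy}{2}$ correction in the section, and the quadratic phase $-\frac12\lambda s\,uu_\uparrow$ comes from combining these with the identity $u\cdot u_\uparrow$ symmetric. We would do this computation first and carefully, checking signs against the requirement $f(nm)=\chi_\Lambda(m)^{-1}f(n)$. The remaining cases are routine.
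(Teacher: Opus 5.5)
Your route is the paper's: use the section $u\mapsto(0,u,0,0)$, compute $n^{-1}(0,u,0,0)$, split off an element $m$ of $M_\Lambda=\{(s,0,y,t)\}$, and read off $\chi_\Lambda(m)^{-1}$ with $\chi_\Lambda(s,0,y,t)=e^{i(\alpha s+\nu y+\lambda t)}$. Your cases $(0,x,0,0)$, $(0,0,y,0)$ and $(0,0,0,t)$ are correct; in particular $m=(0,0,-y,uy)$ is right.

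\textbf{The gap is in the case $n=(s,0,0,0)$.} This is the one case with real content, and it is the one the paper writes out. Your formula $n^{-1}(0,u,0,0)=(-s,u,-su_{\uparrow},-\frac{s}{2}uu_{\uparrow})$ is wrong. In $(-s,0,0,0)(0,u,0,0)$ the left factor has $x=0$ and $y=0$, so the whole correction $\frac{xy'-yx'+sxx'_{\uparrow}}{2}$ vanishes. The product is therefore $(-s,u,-su_{\uparrow},0)$.

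With your formulas, the two terms in your $\tau$ cancel: $\tau=-\frac{s}{2}uu_{\uparrow}+\frac{s}{2}uu_{\uparrow}=0$. Followed literally, they give $e^{i(\alpha s+s\nu u_{\uparrow})}$ with no quadratic phase. So the term $-\frac12\lambda s\,uu_{\uparrow}$ in your write-up is asserted, not derived, and this is not a sign-convention issue. The correct step is
\begin{gather*}
(0,u,0,0)(-s,0,-su_{\uparrow},\tau)=(-s,u,-su_{\uparrow},\tau-\tfrac{s}{2}uu_{\uparrow}).
\end{gather*}
Hence $\tau=\frac{s}{2}uu_{\uparrow}$ and $\chi_\Lambda(m)^{-1}=e^{i(\alpha s+s\nu u_{\uparrow}-\frac{\lambda s}{2}uu_{\uparrow})}$. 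This specializes to all three stated $s$-formulas.

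Your account of where the phase comes from is also off. The 3-step term $\frac{sxx'_{\uparrow}}{2}$ vanishes in both products here: the left factor has $x=0$ in the first and $s=0$ in the second. The phase comes entirely from the Heisenberg term $\frac{xy'}{2}$ with $x=u$ and $y'=-su_{\uparrow}$, that is, from the section correction $-\frac{xy}{2}$.

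\textbf{On $\Lambda=(\alpha,\mu,0,0)$.} You rightly sense this case is ``one-dimensional in spirit,'' and some reinterpretation is indeed needed to get the stated formulas. State it explicitly. With the paper's choice $\mathfrak{M}_\Lambda=\mathfrak{n}_d$, the quotient $N_d/M_\Lambda$ is a point. Then $\operatorname{Ind}_{M_\Lambda}^{N_d}\chi_\Lambda$ is the character $(s,x,y,t)\mapsto e^{i(\alpha s+\mu x)}$, not a representation on $L^2(\R^d)$.

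The lemma's formulas come instead from inducing $e^{i\alpha s}$ from the non-maximal abelian subgroup $\{(s,0,y,t)\}$. That representation is reducible. Your remark that $\mu$ ``enters only through the $x$-direction, consistent with the given formulas'' is inaccurate: the given formulas contain no $\mu$. The paper's proof only says ``similar way'' here, so this is something to make explicit rather than a difference in method.
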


\begin{proof}
    We will check the computation of $((\rho_{\alpha,\lambda}(s,0,0,0))(f))(u)$. One can check the other cases in a similar way.
    We have
    \begin{gather*}
    \begin{split}
     ((\rho_{\alpha,\lambda}(s,0,0,0))(f))(u)
     &= f((s,0,0,0)^{-1} (0,u,0,0))\\
     &= f((-s,0,0,0) (0,u,0,0))\\
     &=f((-s,u,-su_{\uparrow},0))\\
     &=f((0,u,0,0) (-s,0,-su_{\uparrow},\frac{1}{2}s u u_{\uparrow}))\\
     &=\chi_{(\alpha,0,0,\lambda)}((-s,0,-su_{\uparrow},\frac{1}{2}s u u_{\uparrow})^{-1})f(u)\\
     &=\chi_{(\alpha,0,0,\lambda)}((s,0,su_{\uparrow},-\frac{1}{2}s u u_{\uparrow}))f(u)\\
     &=e^{i (\alpha s- \frac{1}{2}\lambda s u u_{\uparrow})} f(u).
    \end{split}
    \end{gather*}
\end{proof}

\subsection{Proof of Theorem \ref{maintheorem}}

By Proposition \ref{GalloSaalcriterion}, in order to prove Theorem \ref{maintheorem}, we need to prove that for each $d\ge 1$, for any $\Lambda\in\mathfrak{n}_d^*$, there exists a unitary representation $\omega_{\Lambda}$ of $K_d$ 
such that 
\begin{itemize}
    \item[(1)] $\rho_{\Lambda}(\mathbf{k}\cdot n)=\omega_{\Lambda}(\mathbf{k})\rho_{\Lambda}(n)\omega_{\Lambda}(\mathbf{k}^{-1})$ for any $n\in N_d$, $\mathbf{k}\in K_d$,
    \item[(2)] $\omega_{\Lambda}$ is multiplicity free.
\end{itemize}

For $\Lambda=(\alpha,0,0,\lambda)$ with $\lambda\neq 0$, we define $\omega_{\Lambda}=\omega_{\alpha,\lambda}$ by
\begin{gather*}
     (\omega_{\alpha,\lambda}(\mathbf{k})(f))(u)=e^{i\lambda (-\frac{1}{2}k_0 u u_{\uparrow}+ ku)}f(u)
\end{gather*}
for $\mathbf{k}=(k_0,k)\in K_d\cong \R^{d+1}$ with $k_0\in \R$, $k=(k_1,\dots,k_d)\in \R^d$, $f\in H_{\Lambda}\cong L^2(\R^d)$, $u\in \R^d$.
Then it is easy to see that $\omega_{\alpha,\lambda}$ is a unitary representation of $K_d$.
We will check condition (1). Since $\omega_{\alpha,\lambda}$ and $\rho_{\alpha,\lambda}$ are representations, it suffices to check that $\rho_{\alpha,\lambda}(\mathbf{k}\cdot n)\omega_{\alpha,\lambda}(\mathbf{k})=\omega_{\alpha,\lambda}(\mathbf{k})\rho_{\alpha,\lambda}(n)$ for $\mathbf{k}=(0,\cdots, k_j,\cdots, 0)$ for some $0\le j\le d$ and $n=(s,0,0,0),(0,x,0,0),(0,0,y,0),(0,0,0,t)$. 
The cases of $n=(s,0,0,0),(0,0,y,0),(0,0,0,t)$ are obvious since we have $\mathbf{k}\cdot n=n$ and since $\rho_{\alpha,\lambda}(n)(f)(u)$ and $\omega_{\alpha,\lambda}(\mathbf{k})(f)(u)$ are scalar multiples of $f(u)$.
For $n=(0,x,0,0)$, we have
\begin{gather*}
\begin{split}
    \rho_{\alpha,\lambda}(k_0\cdot (0,x,0,0))\omega_{\alpha,\lambda}(k_0)(f)(u)
    &=\rho_{\alpha,\lambda}(0,x,k_0 x_{\uparrow},0)\omega_{\alpha,\lambda}(k_0)(f)(u)\\
    &=e^{i(-\frac{1}{2}\lambda k_0uu_{\uparrow})}f(u-x)\\
    &=\omega_{\alpha,\lambda}(k_0)\rho_{\alpha,\lambda}((0,x,0,0))(f)(u)
\end{split}
\end{gather*}
by Lemma \ref{computationofrho} since we have 
\begin{gather*}
    (0,x,k_{0}x_{\uparrow},0)=(0,x,0,0)(0,0,k_{0}x_{\uparrow},0)(0,0,0,-\frac{k_{0}x x_{\uparrow}}{2}).
\end{gather*}
We also have 
\begin{gather*}
    \begin{split}
         \rho_{\alpha,\lambda}(k_j\cdot (0,x,0,0))\omega_{\alpha,\lambda}(k_j)(f)(u)
         &=\rho_{\alpha,\lambda}((0,x,0,k_{j}x_{j}))\omega_{\alpha,\lambda}(k_j)(f)(u)\\
         &=\rho_{\alpha,\lambda}((0,x,0,0)(0,0,0,k_{j}x_{j}))\omega_{\alpha,\lambda}(k_j)(f)(u)\\
         &=e^{i\lambda k_j u_j} f(u-x)\\
         &=\omega_{\alpha,\lambda}(k_j)\rho_{\alpha,\lambda}((0,x,0,0))(f)(u)
    \end{split}
\end{gather*}
for $1\le j\le d$.
Therefore, condition (1) holds.
Condition (2) also holds since we have the following decomposition of the representation $\omega_{\alpha,\lambda}$ on $L^2(\R^d)$ 
\begin{gather*}
    L^2(\R^{d})=\int_{\R^{d}} \chi_{-\lambda \frac{u u_{\uparrow}}{2},\lambda u_1,\dots,\lambda u_{d}}\; du,\quad 
    \chi_{-\lambda \frac{u u_{\uparrow}}{2},\lambda u_1,\dots,\lambda u_{d}}(\mathbf{k})=e^{i\lambda (-\frac{1}{2}k_0 uu_{\uparrow}+ ku)}.
\end{gather*}

For $\Lambda=(0,\mu,\nu,0)$ with $\nu\neq 0$ or $\Lambda=(\alpha,\mu,0,0)$, we define $\omega_{\Lambda}=\omega_{\mu,\nu}, \omega_{\alpha,\mu}$ by
\begin{gather*}
    (\omega_{\mu,\nu}(\mathbf{k})(f))(u)=e^{i k_0 \nu u_{\uparrow}}f(u),\quad  (\omega_{\alpha,\mu}(\mathbf{k})(f))(u)= f(u).
\end{gather*}
Then by a similar argument, we can check that $\omega_{\mu,\nu}$ and $\omega_{\alpha,\mu}$ satisfy conditions (1) and (2); in fact, we have the following decompositions 
\begin{gather*}
    L^2(\R^{d})=\int_{\R^{d}} \chi_{\nu u_{\uparrow},0,\dots,0}\; du,\quad \chi_{\nu u_{\uparrow},0,\dots,0}(\mathbf{k})=e^{i\nu u_{\uparrow} k_0},\\
    L^2(\R^{d})=\int_{\R^{d}}\; du.
\end{gather*}
This completes the proof of Theorem \ref{maintheorem}.

\section*{Acknowledgements}
We would like to thank Professor Linda V. Saal for helpful comments and for letting us know about their recent works on generalized Gelfand pairs.

\bibliographystyle{plain}
\bibliography{main}

\end{document}